\newtheorem{theorem}{Theorem}[section]
\newtheorem{lemma}[theorem]{Lemma}
\newtheorem{problem}[theorem]{Problem}
\newcommand{\convdis}{\,{\buildrel d \over \longrightarrow}\,}
\renewcommand{\Pr}[1]{\mathbb{P}(#1)}
\newcommand{\Ex}[1]{\mathbb{E}[#1]}
\newcommand{\Exb}[1]{\mathbb{E}\left[ #1 \right]}
\newcommand{\eqdist}{\,{\buildrel d \over =}\,}
\newcommand{\UIPM}{{\mathbf{m}_\infty}}
\newcommand{\mn}{\mathbf{m}_n}
\newcommand{\m}{\mathbf{m}}
\title[Pattern occurrences in random planar maps]{Pattern occurrences in random planar maps}
\date{}
\author{Michael Drmota}
\address[Michael Drmota]{Institute of Discrete Mathematics and Geometry, TU Wien}
\email{michael.drmota@tuwien.ac.at}
\thanks{The first author is supported by the Austrian Science Fund FWF, Project F50-02 that is part of the SFB ``Algorithmic and Enumerative Combinatorics''. The second author is supported by the Swiss National Science Foundation grant number 200020\_172515.}
\author{Benedikt Stufler}
\address[Benedikt Stufler]{Institute of Mathematics, University of Zurich}
\email{benedikt.stufler@math.uzh.ch}
\begin{document}
	
	\maketitle

\begin{abstract}
We consider planar maps adjusted with a (regular critical) Boltzmann distribution and show that
the expected number of pattern occurrences of a given map is asymptotically linear
when the number $n$ of edges goes to infinity. 
The main ingredient for the proof is an extension of a formula by Liskovets (1999).
\end{abstract}
	

	
\section{Introduction}	

A planar map $m$ is a connected planar
graph,  possibly  with  loops  and  multiple  edges,  together  with  an  embedding  into  the
plane. Usually one edge is directed and distinguished as the {\it root edge}.
There are several ways of introducing a probability distribution on planar maps (see \cite[Ch. 5]{MR3445851}).
In this paper we will focus on Boltzmann weights
\[
W_{\bf q}(m) = \prod_{f\in \mathcal{F}_m} q_{{\rm deg}(f)},
\]
where ${\bf q} = (q_n)_{n\ge 1}$ is a sequence of non-negative weights, $\mathcal{F}_m$ denotes the
set of faces of $m$, and ${\rm deg}(f)$ the degree of a face $f$. Now for every $n$ these weights 
induce a probability distribution on planar maps with $n$ edges. For example, if $q_n = 1$ for all $n$ then
we obtain uniform random maps, whereas if $q_4 =1$ and $q_n=0$ for $n \ne 4$ then we are just considering uniform
quadrangulations. In what follows we will always assume that ${\bf q}$ is regular
critical (see \cite[Sec. 2.1]{Stephenson2016} for a precise definition). This encompasses the case of uniform planar maps and uniform $p$-angulations for all $p \ge 3$ \cite[Sec. 6.1]{Stephenson2016}.

We denote by $\mn$ a random planar map (with $n$ edges) in this sense. Let $\hat m$ denote a plane representation of some planar map. We say that $\hat m$ occurs in ${m}$ as a pattern, 
if $\hat m$ may be embedded in a face-preserving way into the plane representation of ${m}$. 
For example, if $\hat m$ is a cycle of length $d$ then these embeddings correspond to the 
inner faces of $m$ that are cycles and have degree $d$. 

The main purpose of this paper is to get some knowledge of the random number $s(\hat m,\mn)$ of occurrences of $\hat m$ as 
a pattern in $\mn$. 

\begin{theorem}
	\label{te:main2}
	Let $\hat m$ denote a plane representation of some planar map. Then 
	\[
	n^{-1}\Exb{{s(\hat m,\mn)}} \to   \gamma(\hat m) 
	\]
	for some constant $\gamma(\hat m)$ given in Equation~\eqref{eq:const} below. 
\end{theorem}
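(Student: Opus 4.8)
The plan is to compute the expectation directly by linearity together with a cut-and-glue decomposition of the map, and then to read off the asymptotics from a singularity analysis of the associated generating functions. Write $Z_n = \sum_{|E(m)|=n} W_{\bf q}(m)$ for the partition function, so that
\[
\Exb{s(\hat m,\mn)} = Z_n^{-1}\sum_{|E(m)|=n}W_{\bf q}(m)\,s(\hat m,m).
\]
By linearity of expectation the numerator is the total $W_{\bf q}$-weight of pairs $(m,\phi)$, where $m$ is a map with $n$ edges and $\phi$ is a distinguished face-preserving embedding of $\hat m$ into $m$. I would introduce the two series $S(x) = \sum_m W_{\bf q}(m)\,s(\hat m,m)\,x^{|E(m)|}$ and $M(x)=\sum_m W_{\bf q}(m)\,x^{|E(m)|}$, with $[x^n]M(x)=Z_n$, so that the problem reduces to comparing $[x^n]S(x)$ with $[x^n]M(x)$.

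The combinatorial core is a decomposition of a pair $(m,\phi)$ obtained by cutting $m$ along the boundary of the embedded pattern. This exhibits $m$ as $\hat m$ with a complementary map $m'$ glued into the outer face of $\hat m$ along its outer boundary walk, of degree $\delta := \deg(\text{outer face of }\hat m)$. The inner faces of $\hat m$ persist as genuine faces of $m$ and contribute the fixed weight $w(\hat m)=\prod_{f\ \text{inner}} q_{\deg f}$ and a fixed number of edges $e(\hat m)$, while $m'$ ranges over all maps carrying a root face of degree $\delta$. This is exactly where the extension of Liskovets' formula enters: it converts the count of embeddings, which is sensitive to the automorphisms of $\hat m$, into a count of \emph{rooted} complementary maps, with the symmetry factor $|\mathrm{Aut}(\hat m)|$ appearing explicitly. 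I therefore expect a product factorisation
\[
S(x) = \frac{w(\hat m)}{|\mathrm{Aut}(\hat m)|}\,x^{\,e(\hat m)}\,B_\delta(x),
\]
where $B_\delta(x)$ is the generating function, by number of edges, of the rooted boundary-degree-$\delta$ complementary maps.

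With this factorisation in hand, the asymptotics follow from transfer theorems. Under regular criticality, $M(x)$ has a square-root type singularity of exponent $3/2$ at its radius of convergence $\rho$, giving $Z_n \sim c\,\rho^{-n}n^{-5/2}$. The complementary series $B_\delta(x)$ should inherit the same dominant singularity $\rho$ but with exponent reduced by one unit, namely of type $(\rho-x)^{1/2}$, because fixing a single bounded-degree boundary is morally a single pointing of the full map series; transfer then yields $[x^n]B_\delta(x) \sim c_\delta\,\rho^{-n}n^{-3/2}$. Dividing the two coefficient asymptotics produces the extra factor of $n$ and gives
\[
n^{-1}\Exb{s(\hat m,\mn)} = n^{-1}\,\frac{[x^n]S(x)}{[x^n]M(x)} \longrightarrow \frac{w(\hat m)}{|\mathrm{Aut}(\hat m)|}\,\rho^{\,e(\hat m)}\,\frac{c_\delta}{c} =: \gamma(\hat m),
\]
and matching this with the Boltzmann parameters should reproduce the closed form in \eqref{eq:const}.

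The hard part will be making the decomposition step and its bookkeeping precise. The cut-and-glue bijection needs care when the outer boundary of $\hat m$ is not a simple cycle and when $\hat m$ has non-trivial automorphisms; controlling the resulting symmetry factor is precisely the content of the extended Liskovets formula, and verifying that it holds in the weighted Boltzmann setting is the crux. Careful accounting of the edges shared across the gluing interface is also required to pin down the monomial $x^{e(\hat m)}$. A secondary but essential point is to establish that $B_\delta(x)$ really has singular exponent exactly $1/2$ with a nonzero constant $c_\delta$; here the regular critical hypothesis is what guarantees the universal $3/2$ behaviour of $M(x)$ and of its pointed variants and rules out degenerate cancellations.
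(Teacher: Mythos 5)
Your route is genuinely different from the paper's, and as written it has gaps that go beyond deferred bookkeeping. For comparison, the paper's proof is soft and short: fix a corner-rooted representative $\m$ of $\hat m$, let $Z_n$ count the corners of $\mn$ at which $\m$ occurs, so that $s(\hat m,\mn)=Z_n/\beta(\hat m)$; by the exact corner re-rooting invariance of Lemma~\ref{le:reroot}, $\Exb{Z_n}/(2n)$ equals the probability that the pattern occurs at the root corner, which is a local event and hence converges to $q(\hat m)$ by Stephenson's local limit theorem \cite{Stephenson2016}. No generating functions, no decomposition, and no control of $\mathrm{Aut}(\hat m)$ beyond the elementary factor $\beta(\hat m)$ is needed. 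Note in particular that the ``extended Liskovets formula'' (Theorem~\ref{te:main1}) is not used in this proof and is not about symmetry factors of a cut-and-glue decomposition: it relates corner-rooting to vertex-rooting. So the ingredient you invoke to control $|\mathrm{Aut}(\hat m)|$ does not exist in the form you need it.

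Two concrete problems with your plan. First, the product formula for $S(x)$ is not correct as stated: the complementary object obtained by excising an occurrence is not an arbitrary map with a root face of degree $\delta$. Its distinguished face must have a boundary walk realizing exactly the self-identification pattern (repeated vertices and edges) of the outer boundary walk of $\hat m$, and it must carry both the original root edge of $m$ and a marked corner of the distinguished face recording the gluing. The extra power of $n$ comes from this \emph{double} marking (root plus marked face), not from ``fixing a boundary'': a singly-rooted disk of fixed perimeter has the same $\rho^{-n}n^{-5/2}$ coefficient asymptotics as $M(x)$ itself (compare rooted quadrangulations with quadrangulations with a general boundary of perimeter $4$). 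Second, and more seriously, ``regular critical'' is a probabilistic condition on $\mathbf q$, phrased via the Bouttier--Di Francesco--Guitter framework; it does not hand you $\Delta$-analyticity of $M(x)$, let alone of the constrained doubly-marked series, at their common singularity. For weight sequences of infinite support the transfer-theorem step is precisely the hard part, and proving $[x^n]B_\delta(x)\sim c_\delta\,\rho^{-n}n^{-3/2}$ with $c_\delta>0$ is essentially equivalent to the local-limit input the paper uses (indeed, the statement that a rooted map has linearly many marked faces of degree $\delta$ is itself the cycle case of the theorem you are proving). Your program is the classical Bender--Gao--Richmond approach to submap counting and can be carried out for specific models with explicit disk functions, but in the stated generality these two steps are genuine gaps.
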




The main ingredient of the proof is an extension of a formula by 
Liskovets~\cite[Eq. (2.3.1)]{MR1666953} that we obtain using a local limit result by
Stephenson \cite{Stephenson2016}. We will discuss this extension and prove Theorem~\ref{te:main2} in Section~\ref{sec:Benj}. Furthermore, we collect some open problems in  Section~\ref{sec:Prob}.

\subsection*{Notation and Terminology}
 Rooting a planar map at a corner is equivalent to specifying and orienting an edge. We say the origin of this edge is the root vertex of the map. We use the convention that the face to the ``right'' of the oriented root edge is the outer face and is drawn as the unique unbounded face in plane representations. The radius of the map is the maximal distance of a vertex from the root vertex. Let $m$ denote a corner-rooted planar map. We let $v(m)$ and $e(m)$ denote its number of vertices and edges, and $d(m)$ the degree of its root vertex.
For any integer $r \ge 1$ we let  $U_r^c(m)$ denote the corner-rooted submap induced by all vertices with distance at most $r$ from the vertex incident to the root-corner. We let $m^v$ denote the vertex-rooted version of $m$, where we forget about the root-corner and only mark the root-vertex. In particular,  $U_r^v(m) := (U_r^c(m))^v$ is the vertex-rooted version of the neighbourhood $U_r^c(m)$. Maps may be re-rooted. Given a vertex $v$ and a corner $c$ of $m$, we let $(m, v)$ and $(m, c)$ represent the result of re-rooting $m$ at this vertex or corner and forgetting about the original root.

\section{The vicinity of uniformly selected vertices in random maps}\label{sec:Benj}

Liskovets~\cite[Eq. (2.3.1)]{MR1666953} observed that for certain general models of random rooted maps with $n$ edges, the limiting distributions $(d_k)_{k \ge 1}$ for the degree of the root vertex is related to the limiting distribution $(p_k)_{k \ge 1}$ for the degree of a uniformly selected vertex via the formula
\begin{align}
	\label{eq:first}
	p_k = \frac{2}{k \mu }	d_k
\end{align}
for a certain constant $\mu>0$. In the special case of uniform planar maps, the constant equals $\mu= 1/2$ \cite[Prop. 2.6]{MR1666953}. See also further studies of the vertex degrees in models of random planar maps~\cite{MR3071845,llplanar,vertaofa,MR1666953}. 

The regular critical Boltzmann planar map $\mn$ is known to admit a local limit $\UIPM$ by a result of Stephenson~\cite[Thm. 6.1]{Stephenson2016} (see also \cite{MR2013797,2005math.....12304K, MR3183575, MR3083919, MR3256879}). The random infinite map $\UIPM$ describes the  asymptotic behaviour near the root-edge of $\mn$ as $n$ tends to infinity. Using this probabilistic limit, we extend Liskovets' result  by constructing a limit $\m_\infty^*$ that follows a different distribution than $\m_\infty$ and describes the asymptotic vicinity of a typical vertex.


\begin{theorem}
	\label{te:main1}
	The random map $\mn$ rerooted at a uniformly selected vertex admits a distributional limit $\m_\infty^*$ in the local topology. The convergence preserves the embedding in the plane. For any vertex-rooted planar map $\hat{m}$ and any integer $r \ge 1$ it holds that
\begin{align}
\label{eq:second}
\Pr{U_r^v(\m_\infty^*) = \hat{m}} =  \frac{2}{d(m)\mu_v}  \Pr{U_r^v(\UIPM) = \hat m}
\end{align}
for the constant $\mu_v = \lim_{n \to \infty} \Ex{v(\mn)} / n$.
\end{theorem}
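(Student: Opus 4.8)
The plan is to reduce \eqref{eq:second} to an exact finite-$n$ counting identity and then pass to the limit. Write $\mn^*$ for $\mn$ re-rooted at a uniformly chosen vertex, fix a vertex-rooted pattern $\hat m$ with root degree $d := d(\hat m)$ and an integer $r \ge 1$, and set $N(m) := \#\{v \in V(m) : U_r^v(m,v) = \hat m\}$. Since the Boltzmann weight does not see the root, the underlying unrooted map of $\mn$ has a common law for every rooting, so all expectations below may be taken with respect to it. Because the vertex-rooted ball $U_r^v(m,c)$ depends on a corner $c$ only through its incident vertex, and because every vertex counted by $N(m)$ has degree exactly $d$ (read off from $\hat m$, using $r \ge 1$), the number of corners $c$ with $U_r^v(m,c) = \hat m$ equals $d\,N(m)$. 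Averaging over the uniform root corner and over the uniform root vertex then yields the two exact identities $\Pr{U_r^v(\mn) = \hat m} = \frac{d}{2n}\Ex{N(\mn)}$ and $\Pr{U_r^v(\mn^*) = \hat m} = \Ex{N(\mn)/v(\mn)}$.

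Next I would input the two analytic facts. Stephenson's local limit $\mn \to \UIPM$, transported from corner- to vertex-rooting by the continuous map that forgets the root corner, gives $\Pr{U_r^v(\mn) = \hat m} \to \Pr{U_r^v(\UIPM) = \hat m}$, equivalently $\Ex{N(\mn)}/n \to \frac{2}{d}\Pr{U_r^v(\UIPM) = \hat m}$. I would then combine this with the concentration $v(\mn)/n \convp \mu_v$ via a squeeze: writing $N/v = (N/n)(n/v)$ and splitting over $A_n := \{\,|v(\mn)/n - \mu_v| \le \varepsilon\,\}$ and its complement, on $A_n$ the factor $n/v$ lies in $[(\mu_v+\varepsilon)^{-1},(\mu_v-\varepsilon)^{-1}]$, while Euler's relation gives $N \le v \le n+1$, so that $N/n \le 2$ is bounded and $N/v \le 1$. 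Since $\Pr{A_n^c} \to 0$, the truncated expectation of $N/n$ converges to $\frac{2}{d}\Pr{U_r^v(\UIPM) = \hat m}$, and the contribution of $A_n^c$ to $\Ex{N/v}$ is at most $\Pr{A_n^c} \to 0$. Letting $\varepsilon \to 0$ gives $\Ex{N(\mn)/v(\mn)} \to \frac{2}{d\mu_v}\Pr{U_r^v(\UIPM) = \hat m}$, which is precisely \eqref{eq:second}.

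To obtain the limit object $\m_\infty^*$ itself, I would prove tightness of $(\mn^*)_n$ in the local topology on plane-embedded maps; Prokhorov together with the termwise convergence just established then identifies a unique limit whose ball probabilities are given by \eqref{eq:second}. For tightness I would reuse the corner-versus-vertex comparison: using $\deg(v) \ge 1$, the number of vertices whose $r$-ball has size exceeding $K$ is at most the corresponding degree-weighted count, so on $A_n$ one obtains $\Pr{|U_r^v(\mn^*)| > K} \le \frac{2}{\mu_v-\varepsilon}\Pr{|U_r^v(\mn)| > K} + \Pr{A_n^c}$. Sending $n \to \infty$ and then $K \to \infty$, the right-hand side vanishes because $\mn$ converges and is therefore tight; this simultaneously rules out escape of mass and shows that the limiting ball probabilities sum to $1$.

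The only genuinely external ingredient is the concentration $v(\mn)/n \convp \mu_v$ together with the existence of $\mu_v = \lim \Ex{v(\mn)}/n$, which I would import from the known analysis of regular critical Boltzmann maps via their mobile/tree encoding. The point I expect to require the most care is that the global quantity $v(\mn)$ enters as a denominator; the idea that keeps this harmless is to avoid any integrability estimate and instead exploit the two deterministic bounds $N(m) \le v(m)$ and $\deg(v) \ge 1$, which control both the $A_n^c$-contribution in the squeeze and the tightness comparison.
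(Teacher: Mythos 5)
Your argument is correct and follows the same overall strategy as the paper's proof: a corner-versus-vertex double count, re-rooting invariance at a uniform corner, Stephenson's local limit, concentration of $v(\mn)/n$, a squeeze using the deterministic bounds $N \le v$ and $Y \le 2n$, and a tightness estimate that rules out escape of mass before constructing the limit object. Two points of comparison are worth recording. First, your counting step is a genuine streamlining: by working throughout with the vertex-rooted ball $U_r^v(\cdot)$ as seen from a corner, the number of corners whose ball equals $\hat m$ is exactly $d(\hat m)$ times the number of such vertices, because every such vertex has degree $d(\hat m)$ and hence exactly $d(\hat m)$ incident corners. The paper instead fixes a corner-rooted representative $m$, counts corners with $U_r^c(\mn,c)=m$, and must then track the symmetry constants $\kappa(m)$ (stabilizer size) and $\alpha(m)$ (orbit size) and invoke the orbit--stabilizer identity $\alpha(m)\kappa(m)=d(m)$; your route avoids this bookkeeping and lands on the same factor $2/(d(\hat m)\mu_v)$. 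Second, the one step you assert rather than prove is the invariance of $\mn$ under re-rooting at a uniform corner. The justification ``the Boltzmann weight does not see the root'' is the right intuition but not yet a proof: one must rule out that distinct corners of the same unrooted map produce the same corner-rooted map a number of times that varies with the corner. This is exactly Lemma~\ref{le:reroot}, whose proof uses the asymmetry of corner-rooted maps (an automorphism fixing a corner is the identity) to show that every corner-rooted version of an unrooted map arises from the same number of the $2n$ corner choices. With that lemma supplied, your two exact identities, the squeeze over the event $A_n$, and the Prokhorov-plus-tightness construction of $\m_\infty^*$ (in place of the paper's projective-limit construction via the consistent family $(V_r)_{r\ge 1}$) all go through.
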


Equation~\eqref{eq:second} may be interpreted as an extension of Formula~\eqref{eq:first}. A similar result for the special case of random triangulations and quadrangulations may be obtained by adapting arguments from~\cite{MR2013797,2005math.....12304K}.\footnote{We thank Omer Angel for bringing this to our attention.} Having the limit $\m_\infty^*$ for random quadrangulations at hand, it is possible to use the Tutte bijection to transfer this convergence to the special case of uniform planar maps. However, this case is also encompassed by Theorem~\ref{te:main1} and the approach taken in the present work appears to be simpler and more universal.


The proof of Theorem~\ref{te:main1} requires us to verify the following stochastic re-rooting invariance first. Details on the enumerative background of planar maps may be found in~\cite{MR642058}.
\begin{lemma}
	\label{le:reroot}
	The map $\mn$ is stochastically invariant under re-rooting at a uniformly selected corner.
\end{lemma}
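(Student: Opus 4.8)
The plan is to prove the invariance directly at the level of individual rooted maps. Let $T$ denote the operation of re-rooting at a corner chosen uniformly among the $2n$ corners of a map with $n$ edges; it then suffices to show that $\Pr{T(\mn) = m_0} = \Pr{\mn = m_0}$ for every fixed rooted planar map $m_0$ with $n$ edges. First I would condition on the value of $\mn$ and write
\[
\Pr{T(\mn) = m_0} = \sum_{m} \Pr{\mn = m}\,\frac{1}{2n}\,\bigl|\{\, c : m_c = m_0 \,\}\bigr|,
\]
where $m$ ranges over rooted maps with $n$ edges and $m_c$ denotes $m$ re-rooted at its corner $c$.

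The first observation I would exploit is that the Boltzmann weight $W_{\mathbf{q}}(m) = \prod_{f} q_{\deg(f)}$ depends only on the underlying unrooted map, because the multiset of face degrees is unaffected by the choice of root. Hence re-rooting never changes the weight; only those rooted maps $m$ whose underlying unrooted map equals the underlying unrooted map $M_0$ of $m_0$ contribute to the sum, and each of them satisfies $\Pr{\mn = m} = W_{\mathbf{q}}(M_0)/Z_n$, where $Z_n$ is the partition function over rooted maps with $n$ edges. The task then reduces to the purely combinatorial identity $\sum_{m :\, \underline{m} = M_0} \bigl|\{\, c : m_c = m_0 \,\}\bigr| = 2n$.

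To establish this identity I would invoke the rigidity of rooted maps: a connected map admits no nontrivial automorphism fixing a prescribed corner, since fixing one corner forces an identification to propagate around the whole map along its edge structure. Thus every corner has trivial stabilizer under $\mathrm{Aut}(M_0)$, so by orbit--stabilizer the $2n$ corners of $M_0$ fall into orbits of common size $a := |\mathrm{Aut}(M_0)|$; there are $2n/a$ distinct rootings $m$ of $M_0$, and for each of them exactly $a$ corners $c$ satisfy $m_c = m_0$, namely those in the automorphism orbit of the root-corner of $m_0$. The inner sum is therefore $(2n/a)\cdot a = 2n$, which cancels the factor $1/(2n)$ and yields $\Pr{T(\mn) = m_0} = W_{\mathbf{q}}(M_0)/Z_n = \Pr{\mn = m_0}$.

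The hard part is the careful treatment of automorphisms, and it is precisely the rigidity of rooted maps that makes the count collapse; I would state this fact with a short justification or a pointer to the enumerative background in~\cite{MR642058}. A slightly cleaner variant avoids the orbit count altogether: attach a second, independent uniform corner as a marked point, observe that rigidity makes the $2n$ resulting bi-rooted maps pairwise distinct, and note that the induced law on bi-rooted maps assigns each a probability proportional to $W_{\mathbf{q}}$ and is therefore invariant under the weight-preserving involution swapping the two roots; marginalising out the old root then gives the claim. Either route works, and I would present whichever reads most transparently.
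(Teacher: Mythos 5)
Your proposal is correct and follows essentially the same route as the paper: both reduce the claim to the fact that the Boltzmann weight depends only on the underlying unrooted map, and both use the rigidity (asymmetry) of corner-rooted maps together with an orbit--stabilizer count showing that each rooted version of an unrooted map corresponds to exactly $|\mathrm{Aut}|$ of the $2n$ corners. The paper phrases this as ``every corner-rooted version corresponds to the same number of root-corner choices,'' which is the same computation as your identity $\sum_{m}\bigl|\{\,c : m_c = m_0\,\}\bigr| = 2n$.
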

\begin{proof}
	Let $\tilde M$ be  an arbitrary unrooted map with $n$ edges. We have to show that any corner-rooted version of $\tilde M$ corresponds to the same number of choices among the $2n$ corners of $\tilde M$. If this holds, then re-rooting $\mn$ at a uniformly selected corner is identically distributed to $\mn$.
	
	To this end, let us label the corners of $\tilde M$ with numbers from $1$ to $2n$  to form a corner-labelled unrooted map $M$. There are many ways to do this, and we pick an arbitrary one. A permutation $\sigma$ of $\{1, \ldots, 2n\}$ is termed an automorphism of $M$ if the result of relabelling $M$ according to $\sigma$ is identical to $M$. The collection of automorphisms of $M$ is its automorphism group.
	
	If rooting the map $M$ at $1\le i \le 2n$ or $1 \le j \le 2n$ and forgetting about the labels yields two identical unlabelled corner-rooted maps, then there must be an automorphism $\sigma$ of $M$ such that $\sigma(i) = \sigma(j)$. Conversely, if there exists an automorphism $\sigma$ with $\sigma(i) = j$, then clearly rooting $M$ at $i$ or $j$ yields identical unlabelled corner-rooted maps. Moreover, if $\sigma$ and $\nu$ are automorphisms of $M$ that both satisfy $\sigma(i) = j$ and $\nu(i) = j$, then $\sigma \nu^{-1}$ is an automorphism that fixes the label $i$. Hence $\sigma \nu^{-1}$ is an automorphism of a corner-rooted labelled planar map. Corner-rooted maps are asymmetric, so $\sigma \nu^{-1}$ must be the identity permutation, that is, $\sigma = \nu$. Thus, the cardinality of the automorphism group of $M$ is equal to the number of corners (among the $n$ choices) such that rooting $M$ at this corner yields the same unlabelled corner-rooted map as 
	rooting $M$ at the corner $i$. This number does not depend on $i$, so to any corner-rooted version of $\m$ corresponds to the same number of choices for root-corners.  
\end{proof}

\begin{proof}[Proof of Theorem~\ref{te:main1}]
We let $v_n$ denote a uniformly at random selected vertex of the Boltzmann map $\mn$. Let $m$ be a fixed corner-rooted planar map. Let $\kappa(m)$ denote the number of corners $c$ incident to the root-vertex of $m$ with the property that $m$ is invariant under re-rooting at $c$.  For any integer $r \ge 1$ we may write
\begin{align*}
\Pr{U_r^v(\mn, v_n) = m^v} &= \Ex{X_n / v(\mn)},
\end{align*}
with $X_n$ denoting the number of vertices $v$ in $\mn$ such that $U_r^v(\mn, v) = m^v$.
 To each such vertex correspond precisely $\kappa(m)$ corners $c$ with $U_r^c(\mn,c) = m$. Thus the total number $Y_n$ of corners whose corner-rooted $r$-neighbourhood equals $m$ satisfies
\[
Y_n = \kappa(m) X_n.
\]
Hence
\begin{align}
\label{eq:prob}
\Pr{U_r^v(\mn, v_n) = m^v} =  \Exb{\frac{ Y_n }{v(\mn) \kappa(m)}}.
\end{align}
The map $\mn$ is stochastically invariant under re-rooting at a uniformly select corner $c_n$. Thus
\begin{align*}
\Ex{ Y_n /(2n)}  &= \Pr{U_r^c(\mn,c_n) = m} \\
&=  \Pr{U_r^c(\mn) = m} \\
&\to \Pr{U_r^c(\UIPM) = m}.
\end{align*}
The large deviation bounds~\cite[Lem. 6.6]{Stephenson2016} imply that there is a constant $\mu_v > 0$ and a sequence $t_n$ with $t_n \to 0$ such that
\begin{align}
\label{eq:concentration}
|v(\m_n) / n - \mu_v| \le t_n
\end{align} 
holds with probability tending to $1$ as $n$ tends to infinity. (In the case of uniform maps the fluctuation may even be precisely quantified by the normal distribution, see Lemma~\ref{le:cltvert}. For our purposes the concentration result~\eqref{eq:concentration} in a more general setting suffices.) Using $Y_n / (v(\mn) \kappa(m)) \le 1$  it follows that
\begin{align*}
\Exb{\frac{ Y_n }{v(\mn) \kappa(m)}} &= o(1) + \Exb{\frac{ Y_n }{v(\mn) \kappa(m)}, |v(\mn)/n - \mu_v| \le t_n} \\
&\le o(1) + \Exb{\frac{ Y_n }{n(\mu_v - t_n) \kappa(m)}}\\
&= o(1) + \frac{2}{\kappa(m)\mu_v}\Pr{U_r^c(\UIPM) = m}.
\end{align*}
Similarly, we obtain a lower bound, as $Y_n \le 2n$ implies that
\begin{align*}
\Exb{\frac{ Y_n }{v(\mn) \kappa(m)}} &\ge o(1) + \Exb{\frac{ Y_n }{n(\mu_v + t_n) \kappa(m)}, |v(\mn)/n - \mu_v| \le t_n} \\
&= o(1) + \Exb{\frac{ Y_n }{n(\mu_v + t_n) \kappa(m)}}.
\end{align*}
By Equation~\eqref{eq:prob} this implies
\begin{align*}
\Pr{U_r^v(\mn, v_n) = m^v} &\to \frac{2}{\kappa(m)\mu_v} \Pr{U_r^c(\UIPM) = m} \nonumber \\
&= \frac{2}{\alpha(m)\kappa(m)\mu_v}  \Pr{U_r^v(\UIPM) = m^v}
\end{align*}
with $\alpha(m)$ denoting the number of different corner-rooted maps that may be obtained by re-rooting $m$ at a corner incident to the root-vertex. By standard properties of group operations it holds that
\[
	\alpha(m) \kappa(m) = d(m).
\] Thus
\begin{align}
\label{eq:rel}
\lim_{n \to \infty}
\Pr{U_r^v(\mn, v_n) = m^v} =  \frac{2}{d(m)\mu_v}  \Pr{U_r^v(\UIPM) = m^v}
\end{align}

We are now going to show that this implies distributional convergence for the neighbourhood $U_r^v(\mn, v_n)$. For any vertex rooted planar map $\hat m$ let us set \[p_{\hat m, n} := \Pr{U_r^v(\mn, v_n) = \hat m}\] and \[p_{\hat m} := \frac{2}{d(\hat m) \mu_v} \Pr{U_r^v(\UIPM) = \hat m}.\]
In order to deduce weak convergence of $U_r^v(\mn, v_n)$ it remains to verify
\begin{align}
\label{eq:toverify}
\sum_{\hat m} p_{\hat m} = 1
\end{align}
with the sum index ranging over all vertex-rooted planar maps $\hat m$. To this end, let $X_n(\hat m)$ denote the number of vertices $v$ in $\mn$ with $U_r^v(\mn,v) = \hat m$. Let $Y_n(\hat m)$ denote the number of corners in $\mn$ whose vertex-rooted $r$-neighbourhood equals~$\hat m$. For any fixed $K \ge 1$ it follows from Inequality~\eqref{eq:concentration} that
\begin{align*}
\sum_{k \ge K} \sum_{\hat m, e(\hat m)=k} p_{\hat m,n} &= \Exb{ \sum_{k \ge K} \sum_{\hat m, e(\hat m)=k} \frac{X_n(\hat m)}{v(\mn)} } \\
&\le o(1) + C \Exb{\sum_{k \ge K} \sum_{\hat m, e(\hat m)=k} \frac{Y_n(\hat m)}{2n}} \\
&= o(1) + C \Pr{ e( U_r(\mn)) \ge K}
\end{align*}
for some bound $C>0$ that does not depend on $n$ (or $k$ or $\hat m$) and an $o(1)$ term that converges to zero uniformly in $k$ and $\hat m$ as $n$ becomes large. Since $ U_r^c(\mn) \convdis U_r^c(\UIPM)$ it follows that for any $\epsilon>0$ we may select $K\ge 1$ large enough such that 
\[\sum_{k \ge K} \sum_{\hat m, e(\hat m)=k} p_{\hat m,n} < \epsilon
\]
for large enough $n$. This entails
\[
	\sum_{k < K} \sum_{\hat m, e(\hat m) = k} p_{\hat m} \ge 1 - \epsilon.
\]
We have thus proved Equation~\eqref{eq:toverify}. Hence there is a random vertex-rooted planar map $V_r$ with distribution $\Pr{V_r = \hat m} = p_{\hat m}$ such that
\begin{align}
	\label{eq:conv}
	U_r^v(\mn, v_n) \convdis V_r
\end{align}
as $n$ becomes large. The family $(V_r)_{r \ge 1}$ forms a projective system with respect to the projections $U_r^v(\cdot)$, since for any $1 \le s \le r \le t$ it holds that
\[
	U_s^v(U_r^v(V_t)) \eqdist U_s^v(V_t).
\]
It follows by a general result \cite[Ch. 9, \S 4, No. 3, Theorem 2]{MR0276436}  that there is a random infinite planar map $\m_\infty^*$ such that
\[
	U_r^v(\m_\infty^*) \eqdist V_r
\]
for all $r \ge 1$. By \eqref{eq:conv} it follows that $\m_\infty^*$ is the distributional limit of the random planar map $\m_n$ rerooted at a uniformly selected vertex, and the convergence preserves the embedding in the plane.
\end{proof}

We are now ready to prove our main result.

\begin{proof}[Proof of Theorem~\ref{te:main2}]
	Let $\m$ be a fixed corner-rooted version of the plane map $\hat m$ such that the plane representation of $\m$ that has the unbounded face to the right of the root-edge coincides with  $\hat m$. We say $\m$ occurs as a pattern at a corner $c$ of $\mn$ if $\m$ may be embedded into $\m$ in a face-preserving way such that the root-corner of $\m$ gets mapped to the corner $c$.
	
	If we count the number $Z_n$ of corners of $\mn$ where $\m$ appears as a pattern, then we over-count the occurrences of the unrooted plane map $\hat m$. If $\beta(\hat m)$ denotes the number of ways that $\hat m$ may be rooted at an half-edge of its boundary, then
	\[
	s(\hat m, \mn) = Z_n / \beta(\hat m).
	\]
	Lemma~\ref{le:reroot} together with the convergence of $\mn$ towards the UIPM $\UIPM$ implies that $\Ex{Z_n /2n}$ converges to the probability $q(\hat m)$ that $\hat m$ occurs as a pattern at the root of $\UIPM$. Hence
	\begin{align}
	\label{eq:const}
	\Exb{\frac{s(\hat m,\mn)}{n}} \to \frac{2 q(\hat m)}{\beta(\hat m)}.
	\end{align}
\end{proof}

\section{Open Problems}\label{sec:Prob}

In Theorem~\ref{te:main1} we have shown that there is a Benjamini--Schramm limit ${\bf m}_\infty^*$ of random planar maps.
However, this limit graph has no explicit description. In particular it is not clear how the probability distribution 
of some (simple) parameters of ${\bf m}_\infty^*$ can be computed. 
For example, it would be nice to have a proper representation of the constant $\gamma(\hat m)$ in Theorem~\ref{te:main2}.
\begin{problem}
Describe the Benjamini--Schramm limit ${\bf m}_\infty^*$ of random planar maps in a proper (explicit) way.
\end{problem}

Another open question is to make the pattern count asymptotics of Theorem~\ref{te:main2} more precise. 
Actually a  central limit theorem is expected (as given, for example, in \cite{MR2095934}
for random quadrangulations and 2-connected triangulations or in \cite{MR3071845} for vertices of 
degree $k$ in random maps or 2-connected maps).

\begin{problem}
Does the number of occurrences $s(\hat m, \mn)$ of a pattern $\hat m$ in a random planar map $\mn$
satisfy a central limit theorem (similarly to Lemma~\ref{le2})?
\end{problem}


\section{Appendix}


We have used in the proof of Theorem~\ref{te:main1} that $v(\mn)/n$ is close to constant 
with high probability. For the case of uniform planar maps we make this more precise.
The following central limit theorem seems to be {\it classical} in the theory of random planar maps, however, 
the only explicit reference we found is a lecture by Marc Noy at the Alea-meeting 2010 in Luminy.\footnote
{{\tt https://www-apr.lip6.fr/alea2010/}} We give a proof that is based on the {\it Quadratic Method}.

\begin{lemma}\label{le2}
	\label{le:cltvert} Let $\mn$ denote the uniform planar map. 
	The number of vertices $v(\mn)$ satisfies a central limit of the form
	\[
	\frac{v(\mn) - n/2}{\sqrt{25 n/32}} \convdis \mathcal{N}(0, 1)
	\]
	with $\Exb{v(\mn)} = \frac n2 +1$ and $\mathbb{V}{\rm ar}[\mn]  = 25n/32+ O(1)$.
\end{lemma}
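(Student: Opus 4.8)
The plan is to combine Tutte's \emph{quadratic method} for the functional equation of rooted planar maps with a singularity-perturbation argument in the spirit of Hwang's quasi-powers theorem. Before touching generating functions I would record that the exact mean requires no computation: map duality is a measure-preserving involution on rooted planar maps with $n$ edges that exchanges the numbers of vertices and faces while fixing the number of edges. Hence $v(\mn) \eqdist f(\mn)$, and since Euler's relation gives $v(\mn) + f(\mn) = n + 2$ deterministically, we obtain $2\,\Exb{v(\mn)} = n+2$, i.e. $\Exb{v(\mn)} = n/2 + 1$. The same symmetry shows $v(\mn) - (n+2)/2$ is a symmetric random variable, which is consistent with the centred Gaussian limit.

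For the fluctuations I would introduce the trivariate generating function
\[
M(x,z,u) = \sum_{m} x^{\deg_{\mathrm{root}}(m)}\, z^{e(m)}\, u^{f(m)},
\]
summed over rooted planar maps, where $z$ marks edges, $u$ marks faces, and the catalytic variable $x$ marks the degree of the root face. Deleting the root edge yields a Tutte-type functional equation that is quadratic in $M(x,z,u)$, linear in the boundary term $M(1,z,u)$, and of the schematic form
\[
M(x) = 1 + x^2 z\, M(x)^2 + x z u\, \frac{x M(x) - M(1)}{x-1},
\]
the factor $u$ recording the face created in the non-separating case (the precise placement of $u$ is fixed by tracking face creation in the deletion).

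The next step is the quadratic method: completing the square rewrites the equation as $(\,\cdots M(x) + \cdots)^2 = \Delta(x,z,u)$ with the left-hand side a square of a formal power series, so the discriminant $\Delta$ must have a double root at the value $x = X(z,u)$ of the catalytic variable. Imposing $\Delta = \partial_x \Delta = 0$ at $x=X$ gives a polynomial system from which I would eliminate $X$ to produce an explicit algebraic equation $\Phi(M(1,z,u),z,u) = 0$ for the bivariate series $F(z,u) := M(1,z,u)$ counting rooted planar maps by edges and faces. Euler's relation $u^{f} = u^{\,n+2-v}$ then converts $F$ into the vertex generating function $V(z,u) = \sum_m z^{e(m)} u^{v(m)}$, which is again algebraic.

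Finally I would run a standard singularity-perturbation argument. For $u$ in a complex neighbourhood of $1$ the algebraic function $z \mapsto V(z,u)$ has a unique dominant singularity $\rho(u)$ of the map type $(1 - z/\rho(u))^{3/2}$, with $\rho$ analytic and $\rho(1) = 1/12$. The Flajolet--Odlyzko transfer theorem, applied uniformly in $u$, yields
\[
\Exb{u^{v(\mn)}} = \frac{[z^n] V(z,u)}{[z^n] V(z,1)} = A(u)\Big(\frac{\rho(1)}{\rho(u)}\Big)^n\bigl(1 + o(1)\bigr)
\]
uniformly for $u$ near $1$, with $A$ analytic and $A(1)=1$. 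This is exactly the quasi-powers form, so Hwang's theorem delivers asymptotic normality with mean $-\bigl(\rho'(1)/\rho(1)\bigr) n$ and variance $\sigma^2 n + O(1)$, where $\sigma^2 = (\rho'(1)/\rho(1))^2 - \rho''(1)/\rho(1) - \rho'(1)/\rho(1)$. The main obstacle is twofold: carrying out the quadratic-method elimination cleanly enough to read off $\rho(u)$, and, above all, extracting the exact second-order constant, since $\sigma^2 = 25/32$ requires the value of $\rho''(1)$ and hence a careful implicit differentiation of the discriminant system at $u = 1$. The leading constant $-\rho'(1)/\rho(1) = 1/2$ is already pinned down by the duality argument above, which provides a reassuring consistency check.
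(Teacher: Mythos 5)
Your proposal follows essentially the same route as the paper: the quadratic method applied to Tutte's root-face-deletion equation, followed by a singularity-perturbation/quasi-powers argument with $\mu = -\rho'(1)/\rho(1)$ and the same variance formula, plus the duality--Euler argument for the exact mean $n/2+1$. The only cosmetic difference is that you mark faces and convert to vertices via Euler's relation, while the paper marks vertices directly in $M(z,x,u)$; the remaining work you flag (computing $\rho''(1)$ from the discriminant system to get $\sigma^2 = 25/32$) is exactly what the paper carries out explicitly.
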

\begin{proof}
	Let $M(z,x,u)$ denote the generating function of rooted planar maps, where the variable $z$ 
	corresponds to the number of edges, $x$ to the number of vertices and $u$ to the root face valency.
	Then by the usual combinatorial decomposition of maps we have
	\[
	M(z,x,u) = x + zu^2 M(z,x,u)^2 + zu \frac{M(z,x,1)- u M(z,x,u)}{1-u}
	\]
	and by the quadratic method we can express $M(z,x,1)$ as a rational function in $u = u(z,x)$
	that is given by the solution of the algebraic equation
	\[
	4u^4xz+u^4z^2-2u^4z-8u^3xz+4u^3z+4u^2xz+2u^3-2u^2z-7u^2+8u-3 = 0,\, u(0,1) = 1,
	\]
	from which we obtain a singular expansion of the form
	\[
	u(z,x) = u_0(x) + u_1(x) \sqrt{1- \frac z{\rho(x)}} + u_2(x) \left( 1- \frac z{\rho(x)} \right) 
	+ u_3(x)\left( 1- \frac z{\rho(x)} \right)^{3/2}+\cdots,
	\]
	where the functions $u_j(x)$ are analytic at $x=1$, satisfy
	$u_0(1) = \frac 65$, $u_1(1) = -\frac 6{25}$, $u_2(1) = \frac 6{125}$, $u_3(1) = -\frac 6{625}$,
	and the function $\rho(x)$ satisfies the equation
	\begin{multline*}
	3072x^3z^4-4608x^2z^4-1536x^2z^3+4608xz^4+1536xz^3\\-1536z^4+192xz^2+768z^3-96z^2 = 0
	\end{multline*}
	with $\rho(1) = \frac 1{12}$. From this it follows that 
	\[
	M(z,x,1) = \frac{1-(4xz-z^2)u^4-(-8x+2)zu^3-(-1+(4x-2)z)u^2-2u}{4(1-u)u^3z^2}
	\]
	(with $u = u(z,x)$) has a local representation of the form
	\[
	M(z,x,1) = b_0(x) + b_2(x)\left( 1- \frac z{\rho(x)} \right) + b_3(x)\left( 1- \frac z{\rho(x)} \right)^{3/2} + \cdots,
	\]
	where the functions $b_j(x)$ are analytic at $x=1$ and satisfy $b_0(1) = \frac 43$, $b_2(1) = -\frac 43$, $b_3(1) = \frac 83$.
	
	At this stage we can apply standard tools (see \cite[Chapter 2]{MR2484382}) 
	to obtain a central limit theorem for $v(\mn)$ of the form
	$(v(\mn) - \mu n)/{\sqrt{\sigma^2 n}} \convdis \mathcal{N}(0, 1)$, where
	\[
	\mu = - \frac{\rho'(1)}{\rho(1)}, \quad \sigma^2 = \mu + \mu^2 - \frac{\rho''(1)}{\rho(1)}.
	\]
	Since $\rho'(1) = - \frac 1{24}$ and $\rho''(1) = - \frac 1{384}$ we immediately obtain
	$\mu = \frac 12$ and $\sigma^2 = \frac {25}{32}$. We also have 
	$\Exb{v(\mn)} = \mu n + O(1)$ and $\mathbb{V}{\rm ar}[\mn]  = \sigma^2 n +  O(1)$.
	In this special case Euler's relation and duality can be used to obtain (the even more precise representation)
	$\Exb{v(\mn)} = n/2 + 1$.
\end{proof}


\bibliographystyle{siam}
\bibliography{maps}

\end{document}